\newtheorem{theorem}{Theorem}[section]
\newtheorem{lemma}[theorem]{Lemma}
\newtheorem{proposition}[theorem]{Proposition}
\newtheorem*{proposition*}{Proposition}
\theoremstyle{definition}
\newtheorem{definition}[theorem]{Definition}
\newtheorem{example}[theorem]{Example}
\theoremstyle{remark}
\newtheorem{remark}[theorem]{Remark}
\numberwithin{equation}{section}
\newcommand{\abs}[1]{\lvert#1\rvert}
\newcommand{\A}{\mathbb{A}}
\newcommand{\C}{\mathbb{C}}
\newcommand{\E}{\mathcal{E}}
\newcommand{\EE}{\mathsf{E}}
\newcommand{\R}{\mathbb{R}}
\newcommand{\X}{\mathbb{X}}
\newcommand{\Y}{\mathbb{Y}}
\newcommand{\dtext}{\textnormal d}
\newcommand{\D}{\mathfrak{D}}
\newcommand{\Ho}{\mathsf{H}}
\newcommand{\cto}{\xrightarrow[]{{}_{c\delta}}}
\newcommand{\onto}{\overset{{}_{\textnormal{\tiny{onto}}}}{\longrightarrow}}
\DeclareMathOperator{\dist}{dist} 
\DeclareMathOperator{\id}{id}
\def\XXint#1#2#3{{\setbox0=\hbox{$#1{#2#3}{\int}$}
\vcenter{\hbox{$#2#3$}}\kern-.5\wd0}}
\def\le{\leqslant}
\def\ge{\geqslant}
\begin{document}

\title[{Energy-minimal mappings between Riemann surfaces}]{Lipschitz regularity of energy-minimal mappings between  doubly connected Riemann surfaces}

\author{David Kalaj}
\address{Faculty of Natural Sciences and
Mathematics, University of Montenegro, Cetinjski put b.b. 81000
Podgorica, Montenegro} \email{davidk@ac.me}

\subjclass[2000]{Primary 58E20; Secondary 30C62, 31A05}


\keywords{Dirichlet energy, Riemann surfaces, minimal energy,
harmonic mapping, conformal modulus}

\begin{abstract} Let  $M$ and $N$ be doubly connected
 Riemann surfaces with $\mathscr{C}^{1,\alpha}$ boundaries and with nonvanishing conformal metrics $\sigma$ and $\wp$ respectively, and assume that $\wp$ is a smooth
metric with bounded Gauss curvature $\mathcal{K}$ and finite area.
Assume that ${\Ho}_\rho(M, N)$ is the class of all $\mathscr{W}^{1,2}$ bomeomorphisms between $M$ and $N$ and assume that $\mathcal{E}^\wp: \overline{\Ho}_\rho(M, N)\to \mathbf{R}$ is the Dirichlet-energy functional, where $\overline{\Ho}_\rho(M, N)$ is the closure of ${\Ho}_\rho(M, N)$ in $\mathscr{W}^{1,2}(M,N)$. By using a result of Iwaniec, Kovalev and Onninen in \cite{duke} that the minimizer,  is locally Lipschitz, we prove that the minimizer,  of the energy functional $\mathcal{E}^\wp$, which is not a diffeomorphism in general, is a globally Lipschitz mapping of $M$ onto $N$.
\end{abstract}

\maketitle \tableofcontents
\section{Introduction}\label{intsec}
The primary goal of this paper is to study Lipschitz behaviour of stationary deformations of the Dirichlet  energy
 of mappings between doubly-connected Riemann surfaces. The existence part has been proved. More precisely, Iwaniec, Koch, Kovalev and Onninen in \cite{inve} proved that there exists so-called deformation $f$ that maps a double connected domain $\X$ onto a doubly connected domain $\Y$ in the complex plane and which minimizes the Diriclet energy integral throughout the class os deformations $\mathcal{D}(\X,\Y)$ which contains the class of all Sobolev $\mathscr{W}^{1,2}(\X)$ homeomorphism. The minimizer is a harmonic diffeomorphism, provided that the conformal moduli satisfies the relation $\mathrm{Mod}(\X)\le \mathrm{Mod}(\Y)$. Moreover, the Dirichlet energy is invariant under conformal change of the original domain. This is why the original domain can be chosen to be equal to  $\X=\A(r,R):=\{z: r<|z|<R\}$.  The Hopf's differential defined by $\mathrm{Hopf}(f)=f_z\overline{f}_z$ has very special form for so-called stationary deformations namely $$\mathrm{Hopf}(f)(z) = \frac{c}{z^2}, \ \ z\in \A(r,R).$$
Later this result has been generalized by the author in \cite{calculus} for so-called admissible metrics $\wp$ in $\Y$.

In this case Hopf's differential is defined by $$\mathrm{Hopf}(f)=\wp^2(f(z))f_z\overline{f}_z$$ and has a very special form for a so-called stationary deformation namely $$\mathrm{Hopf}(f)(z) = \frac{c}{z^2}, \ \ z\in \A(r,R).$$

Further in \cite{duke}, Iwaniec, Kovalev  and Onninen proved that every stationary deformation is locally  Lipschitz in the domain.

In \cite{geom} the author proved that if $f:\X\onto \Y$ is a $\rho-$harmonic diffeomorphic minimizer, and  $\partial \X,\partial\Y\in \mathscr{C}^{2}$ then $f$ is Lipschitz continuous up to the boundary.

In \cite{annalen} the author and Lamel proved that a minimizer of the Euclidean Dirichlet energy which is a diffeomophic surjection of $\X$ onto $\Y$ has smooth extension up to the boundary. More precisely they proved that, they proved that, if $f:\X\onto \Y$ is a Euclidean diffeomorphic minimizer, so that $\partial \X,\partial\Y\in \mathscr{C}^{1,\alpha}$, then the $f\in\mathscr{C}^{1,\alpha'}(\overline{\X})$, where $\alpha'=\alpha$, if $\mathrm{Mod}(\X)\ge \mathrm{Mod}(\Y)$ and $\alpha'=\alpha/(2+\alpha)$ if $\mathrm{Mod}(\X)< \mathrm{Mod}(\Y)$.

In \cite{arxivkalaj} the author extended the main result in \cite{annalen} and proved the following extension of the Kellogg theorem.
Every diffeomorphic minimiser of Dirichlet energy of Sobolev mappings between doubly connected Riemanian surfaces  $(\X,\sigma)$ and $(\Y,\rho)$ having $\mathscr{C}^{n,\alpha}$ boundary, $0<\alpha<1$, is $\mathscr{C}^{n,\alpha}$ up to the boundary, provided the metric $\rho$ is smooth enough. Here $n$ is a positive integer. It is crucial that, every diffeomorphic minimizer of Dirichlet energy is a harmonic mapping with a very special Hopf differential and this fact is used in the proof.

\subsection{Harmonic mappings between Riemann surfaces} Let $M=(\X, \sigma)$ and $N=(\Y,\wp)$ be
Riemann surfaces with metrics $\sigma$ and $\wp$, respectively. If
a mapping $f:M\to N,$ is $C^2$, then $f$ is said to be harmonic (to
avoid the confusion we will sometimes say $\wp$-harmonic) if

\begin{equation}\label{el}
f_{z\overline z}+{(\log \wp^2)}_w\circ f f_z\,f_{\bar z}=0,
\end{equation}
where $z$ and $w$ are the local parameters on $M$ and $N$
respectively. Also $f$ satisfies \eqref{el} if and only if its Hopf
differential
\begin{equation}\label{anal}
\mathrm{Hopf}(f)=\wp^2 \circ f f_z\overline{f_{\bar z}}
\end{equation} is a
holomorphic quadratic differential on $M$. Let $$|\partial
f|^2:=\frac{\wp^2(f(z))}{\sigma^2(z)}\left|\frac{\partial
f}{\partial z}\right|^2\text{ and }|\bar \partial
f|^2:=\frac{\wp^2(f(z))}{\sigma^2(z)}\left|\frac{\partial
f}{\partial \bar z}\right|^2$$ where $\frac{\partial f}{\partial z}$
and $\frac{\partial f}{\partial \bar z}$ are standard complex
partial derivatives. The $\wp-$Jacobian is defined by
$$J(f):=|\partial f|^2-|\partial \bar f|^2.$$ If $u$ is sense preserving,
then the $\wp-$Jacobian is positive. The Hilbert-Schmidt norm of
differential $df$ is the square root of the energy $e(f)$ and is
defined by
\begin{equation}\label{hil}|df|=\sqrt{2|\partial f|^2+2|\partial\bar
f|^2}.\end{equation}  For $g:M \mapsto N$ the $\wp-$
\emph{Dirichlet energy} is defined by
\begin{equation}\label{ener1} \E^\wp[g]=\int_{M} |dg|^2 dV_\sigma,
\end{equation}
where $\partial g$, and $\bar \partial g$ are the partial
derivatives taken with respect to the metrics $\varrho$ and
$\sigma$, and $dV_\sigma$, is the volume element on $(M,\sigma)$,
which in local coordinates takes the form $\sigma^2(w)du\wedge dv,$
$w=u+iv$. Assume that energy integral of $f$ is bounded. Then a
stationary point $f$ of the corresponding functional where the
homotopy class of $f$ is the range of this functional is a harmonic
mapping. The converse is not true. More precisely there exists a
harmonic mapping which is not stationary. The literature is not
always clear on this point because for many authors, a harmonic
mapping is a stationary point of the energy integral. For the last
definition and some important properties of harmonic maps see
\cite{jost}. It follows from the definition that, if $a$ is conformal
and $f$ is harmonic, then $f\circ a$ is harmonic. Moreover if $b$ is
conformal, $b\circ f$ is also harmonic but with respect to
(possibly) an another metric $\wp_1$.

 Moreover if $N$ and $M$ are double connected
Riemann surfaces with non vanishing metrics $\sigma$ and $\wp$,
then by \cite[Theorem~3.1]{jost}, there exist conformal mappings
$X\colon \X \onto M$ and $X^*\colon \Y\onto N$ between
double connected plane domains $\X$ and $\Y$ and Riemann
surfaces $M$ and $N$ respectively.

Notice that the harmonicity neither Dirichlet energy do not depend
on metric $\sigma$ on domain so we will assume from now on
$\sigma(z)\equiv 1$. This is why throughout this paper
$M=(\X,\mathbf{1})$ and $N=(\Y,\wp)$ will be doubly
connected domains in the complex plane $\C$ (possibly unbounded),
where $\mathbf{1}$ is the Euclidean metric. Moreover $\wp$ is a
nonvanishing smooth metric defined in $\Y$ with bounded Gauss
curvature $\mathcal{K}$ where
\begin{equation}\label{gaus}\mathcal{K}(z)=-\frac{\Delta \log \wp(z)}{\wp^2(z)},\end{equation} (we
put $\kappa:=\sup_{z\in \Y} |\mathcal{K}(z)|<\infty$) and with
finite area defined by
$$\mathcal{A}(\wp)=\int_{\Y}\wp^2(w) du dv, \ \ w=u+iv.$$

We call a metric $\rho$ an admissible one if there  is a constant $C_\wp>0$ so that \begin{equation}\label{pp}{|\nabla \wp(w)|}\le C_\wp{\wp(w)}, \ \ \ w\in\Y \ \ i.e. \ \ \nabla \log \wp\in L^\infty(\Y)\end{equation} which means that $\wp$ is so-called approximately analytic function (c.f. \cite{EH}).

Assume that the domain of $\wp$ is the unit disk $\mathbf{D}:=\{z: |z|<1\}\subset\mathbf{C}$.
From \eqref{pp} and boundedness of $\wp$, it follows that it is Lipschitz, and so it is continuous up to the boundary.  Again by using \eqref{pp}, the function $\psi(t)=\wp(te^{i\alpha})$, $0<t<1$, $\alpha\in[0,2\pi]$ satisfies the differential inequalities $-C_\wp\le \partial_t \log \psi(t)\le C_\wp$, which by integrating in $[0,t]$ imply that $\psi(0)e^{-C_\wp t}\le \psi(t)\le \psi(0)e^{C_\wp t}$. Therefore under the above
conditions  there holds the double inequality \begin{equation}\label{double}0<\wp(0)e^{-C_\wp} \le {\wp(w)}\le \wp(0)e^{C_\wp}<\infty, \ \ w\in\mathbf{D}.\end{equation}
A similar inequality to \eqref{double} can be proved for $\Y$ instead of $\mathbf{D}$.
  The Euclidean metric ($\wp\equiv 1$) is an admissible metric. The Riemannian metric defined by $\wp(w)={1}/{(1+|w|^2)^2}$ is admissible as well.
The Hyperbolic metric $h(w)={1}/{(1-|w|^2)^2}$ is not an admissible metric on the unit disk neither on the annuli $\A(r,1):= \{z:r<|z|<1\}$,
but it is admissible in $\A(r,R):=\{z: r<|z|<R\}$, where $0<r<R<1$.
We call such a metric
\emph{allowable} one (cf. \cite[P.~11]{Ahb}). If $\wp$ is a given
metric in $\Y$, we conventionally extend it to be equal to
$0$ in $\partial\Y$.
As we already pointed out, we will study the minimum of Dirichlet
integral of mappings between certain sets. We refer to introduction
of \cite{inve} and references therein for good setting of this
problem and some connection with the theory of nonlinear elasticity.
Notice first that a change of variables $w=f(z)$ in~\eqref{ener1}
yields
\begin{equation}\label{ener2}
{\E^\wp}[f] = 2\int_{\X} \wp^2(f(z))J_f(z)\, dz +
4\int_{\X}\wp^2(f(z)) \abs{f_{\bar z}}^2dz\ge 2
\mathcal{A}(\wp)
\end{equation}
where $J_f$ is the Jacobian determinant and $\mathcal{A}(\wp)$ is
the area of $\Y$ and $dz:=dx\wedge dy$ is the area element
w.r. to Lebesgue measure on the complex plane. A conformal mapping
of $f:\X\onto\Y$; that is, a homeomorphic solution of the
Cauchy-Riemann system $ f_{\bar z}=0$, would be an obvious choice
for the minimizer of~\eqref{ener2}. For arbitrary multiply connected
domains there is no such mapping.
%

Any energy-minimal diffeomorphism satisfies Euler-Laplace's
equation, since one can perform first variations while preserving
the diffeomorphism property. However, in the case of Euclidean
metric $\wp\equiv 1$, the existence of a harmonic diffeomorphism
between certain sets does not imply the existence of an
energy-minimal one, see \cite[Example~9.1]{inve}. Example~9.1 in
\cite{inve} has been constructed with help of affine self-mappings
of the complex plane. For a general metric $\wp$, affine
transformations are not harmonic, thus we cannot produce a similar
example.

\subsection{Statement of results}
The main results of this paper is the following
extension of the main result in \cite{geom}.
\begin{theorem} \label{mainexist}
Suppose that $\X$ and $\Y$ are doubly connected domains in
$\C$ with $\mathscr{C}^{1,\alpha}$ boundaries. Assume that $\D^{\wp}(\X,\Y)$ is the class of deformations as in  definition  \eqref{defdef} below.  Let $\wp$ be an
admissible metric in $\Y$. Then there exists a deformation  $w$
that minimizes $\wp-$energy throughout the class of deformations $\D^{\wp}(\X,\Y)$, and it is Lipschitz continuous up to the boundary of $\X$.
\end{theorem}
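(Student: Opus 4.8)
The plan is to start from the interior theory and upgrade it to the boundary. By the conformal invariance of the Dirichlet energy in the domain and the Kellogg--Warschawski theorem, the conformal parametrization of $\X$ by a round annulus $\A(r,R)$ and its inverse are $\mathscr{C}^{1,\alpha}$ up to the boundary, hence bi-Lipschitz; composing with it I may assume $\X=\A(r,R)$. Existence of the minimizer $w=:f$ and the fact that it is a stationary deformation with $\mathrm{Hopf}(f)=\wp^2(f)f_z\overline{f_{\bar z}}=c/z^2$ follow from \cite{inve,calculus}, while \cite{duke} gives that $f$ is locally Lipschitz in the open annulus. Since $f\in\mathscr{C}(\overline{\A(r,R)})$ maps each boundary circle monotonically onto a boundary component of $\Y$, the theorem reduces to a uniform bound on $|Df|$ in a collar neighbourhood of each of the two circles $|z|=r$ and $|z|=R$.

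Next I would exploit the special form of the Hopf differential. Passing to the logarithmic coordinate $\zeta=s+it=\log z$, the quadratic differential $\mathrm{Hopf}(f)\,dz^2=c\,d\zeta^2$ becomes constant, and the stationarity (free boundary correspondence on $\partial\Y$) forces $c\in\mathbf{R}$. Separating real and imaginary parts of $\wp^2(f)f_\zeta\overline{f_{\bar\zeta}}=c$ yields the two structural identities $\mathrm{Re}(f_s\overline{f_t})=0$ and $|f_s|^2-|f_t|^2=4c/\wp^2(f)$ throughout the domain. Because $\wp$ is admissible it is bounded above and below by \eqref{double}, so the second identity shows that a bound on the tangential derivative $|f_t|=|f_\theta|$ automatically controls $|f_s|$, and hence all of $|Df|$. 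Thus everything is reduced to an $L^\infty$ bound for the tangential derivative of the monotone boundary parametrizations $t\mapsto f(\log R,t)$ and $t\mapsto f(\log r,t)$ of the $\mathscr{C}^{1,\alpha}$ curves comprising $\partial\Y$.

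To obtain this bound near a boundary component $\Gamma\subset\partial\Y$ along which $f$ is a local diffeomorphism, I would flatten $\Gamma$: a $\mathscr{C}^{1,\alpha}$ change of coordinates $\Phi$ carries a one-sided neighbourhood of $\Gamma$ in $\overline{\Y}$ onto a half-disc, sends $\Gamma$ onto a segment of $\mathbf{R}$, and transforms $\wp$ into an admissible metric $\tilde\wp$, which I extend by even reflection so that $\tilde\wp(u,v)=\tilde\wp(u,-v)$. Since $f(\A(r,R))\subseteq\overline{\Y}$, the second component of $\tilde f=\Phi\circ f$ is nonnegative and vanishes on the boundary segment; reflecting $\tilde f$ oddly in its second component across $\mathbf{R}$ produces a $\mathscr{W}^{1,2}$ weak solution of the harmonic-map equation \eqref{el} for the symmetric metric $\tilde\wp$ on a full two-sided neighbourhood. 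Two-dimensional harmonic-map regularity together with the Schauder theory for the resulting $\mathscr{C}^{\alpha}$-coefficient equation then gives $\tilde f\in\mathscr{C}^{1,\alpha'}$ across $\mathbf{R}$, so $|D\tilde f|$, and therefore $|f_t|$ on $\Gamma$, is bounded.

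The main obstacle is the non-diffeomorphic regime $\Mod(\X)>\Mod(\Y)$, where the reflection argument breaks down because $f$ is not injective: by \cite{inve} the minimizer then hammers an inner sub-annulus $\{r\le|z|\le\rho_\ast\}$ onto the inner curve $\partial_0\Y$, so $\tilde f$ no longer maps an interior neighbourhood onto a single side of $\Gamma$. On this collapsed region the Jacobian vanishes identically, so the orthogonality $\mathrm{Re}(f_s\overline{f_t})=0$ forces $f_s\equiv0$ there; hence $f$ depends only on $t$ on $\{r\le|z|\le\rho_\ast\}$ and $|Df|$ is controlled by its value on the critical circle $|z|=\rho_\ast$, which is interior and to which the local Lipschitz bound of \cite{duke} applies. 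The remaining delicate point is to glue the two estimates across $|z|=\rho_\ast$ and to verify, using the monotonicity of the boundary values and the minimality of $f$, that away from the collapsed set the image stays on a single side of $\partial\Y$, so that the flattening-and-reflection step is legitimate; this is where the bulk of the technical work lies.
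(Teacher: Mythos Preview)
Your argument has a genuine gap at the flattening step. Whether you straighten $\Gamma\subset\partial\Y$ by a general $\mathscr{C}^{1,\alpha}$ diffeomorphism or by a conformal map (which, by Kellogg, is itself only $\mathscr{C}^{1,\alpha}$ up to $\Gamma$), the induced target metric $\tilde\wp$ is at best $\mathscr{C}^{\alpha}$: in the conformal case $\tilde\wp(\zeta)=\wp(\Phi(\zeta))\,|\Phi'(\zeta)|$ and $|\Phi'|$ is only $\mathscr{C}^{\alpha}$. In particular $\tilde\wp$ is \emph{not} admissible in the sense of \eqref{pp}, the coefficient $(\log\tilde\wp^2)_w$ appearing in \eqref{el} is undefined, and there is no Schauder theory to invoke. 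The even reflection of $\tilde\wp$ across $\{v=0\}$ makes matters worse: even were $\tilde\wp$ of class $\mathscr{C}^{1}$, the reflected metric would generically have a corner along the axis, so its Christoffel symbol would jump and the odd reflection of $\tilde f$ would not be a weak solution of \eqref{el} across $\{v=0\}$. Your separate treatment of the hammering region also rests on structural facts (that the collapse set is a sub-annulus on which $f_s\equiv 0$) which require their own proof.

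The paper sidesteps all of this by never using the Euler--Lagrange equation. It maps $\Y$ conformally onto a round annulus $\A(1,\rho)$, so that the boundary becomes reflection-symmetric, and extends $F=\Psi\circ f$ across both circles of $\A(1,R)$ by the inversions $w\mapsto \rho^2/\bar w$ and $w\mapsto 1/\bar w$ to a map $\tilde F$ on $\A(1/R,R^2)$. The only object used thereafter is the \emph{Hopf product} $\tilde F_z\overline{\tilde F_{\bar z}}=\mathrm{Hopf}(\tilde F)/\tilde\wp^{2}(\tilde F)$, which involves $\wp_1$ but not its derivatives. Kellogg gives $|\Phi'|\asymp 1$, so this product is bounded; hence $\tilde F$ is $(1,K')$-quasiconformal, hence $\tfrac12$-H\"older on the enlarged annulus by Proposition~\ref{gilbarg}. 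Feeding this back makes $\tilde F_z\overline{\tilde F_{\bar z}}$ itself $\tfrac{\alpha}{2}$-H\"older, and then Proposition~\ref{LipHopf2} from \cite{duke}, applied on $\A(1/R,R^2)\Supset\overline{\A(1,R)}$, yields the global Lipschitz bound on $F$ in one stroke. No diffeomorphic/hammering dichotomy is needed, because the argument uses only the inner-variational (Hopf) identity, which the minimizer satisfies everywhere regardless of injectivity.
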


\begin{remark}
In contrast to the main result obtained in \cite{geom}, in Theorem~\ref{mainexist} the minimizer is not necessarily a diffeomorphism, and such a minimizer exists almost always except in some degenerate cases (see Proposition~\ref{attain} below).
\end{remark}
The following example is taken from \cite{calculus} for general radial metric. For a corresponing example for Euclidean metric see \cite{inve}.
\begin{example}
Assume that $\wp(w)=\rho(|w|)$ is a radial metric defined in the annulus $\Omega^*=\A(1,R)$. Assume that $\Omega=\A(1,r)$.
Chose  \begin{equation*}r<
\exp\left(\int_{1}^{R}
\frac{\rho(y)dy}{\sqrt{y^2\rho^2(y)-R^2\rho^2(R)
}}\right).\end{equation*} Let
$$r_\diamond=\exp\left(\int_{1}^{R}
\frac{\rho(y)dy}{\sqrt{y^2\rho^2(y)-R^2\rho^2(R) }}\right).$$ Then
$r<r_\diamond<1$.

Further for $\gamma=-\delta^2\rho^2(\delta)$,
we have well defined function
\begin{equation}\label{qu}q_{\diamond}(s)=\exp\left(\int_{\sigma}^s
\frac{dy}{\sqrt{y^2-\delta^2\rho^2(\delta)\varrho^2 }}\right),
\delta\le s\le \sigma.\end{equation}

Then the infimum
$\EE^\rho(\Omega,\Omega^*)$ is realized by a non-injective
deformation $h \colon \Omega \onto \Omega^\ast$
\[h(z)=\begin{cases} R\frac{z}{\abs{z}} & \mbox{for } r<\abs{z} \le r_\diamond\\
h_{\diamond}(z)& \mbox{ for } r_\diamond\le \abs{z} <1
\end{cases}\] where $h_{\diamond}(z)=p_{\diamond}(|z|)e^{it}=(q_{\diamond})^{-1}(|z|)e^{it}$, $z=|z|e^{it}$ and  $q_{\diamond}$ is defined in \eqref{qu} below

 Here the radial projection $z\mapsto R z/\abs{z}$
hammers $A(r,r_\diamond)$ onto the circle $|z|=R$ while the critical
$\rho-$Nitsche mapping $h_{\diamond}$ takes $A(r_\diamond,1)$
homeomorphically onto $\Omega^\ast$.
\end{example}
\section{Background and preliminaries}\label{stasec}

A homeomorphism of a planar domain is either sense-preserving or
sense-reversing. For  homeomorphisms of the Sobolev class
$\mathscr{W}^{1,1}_{\rm loc}(\X, \Y)$ this implies that the
Jacobian determinant does not change sign: it is either nonnegative
or nonpositive at almost every point~\cite[Theorem 3.3.4]{AIMb}. The homeomorphisms considered in this paper are
sense-preserving.

Let $\X$ and $\Y$ be domains in $\C$. Let $a\in \Y$
and $b\in \partial \Y$. We define
$$\dist_\wp(a,b):=\inf_\gamma \int_{\gamma}\wp(w)|dw|,$$ where $\gamma$ ranges over all
rectifiable Jordan arcs connecting $a$ and $b$ within $\Y$ if
the set of such Jordan arcs is not empty (otherwise we
conventionally put $\dist_\wp(a,b)=\infty$). To every mapping $f
\colon \X \to \overline{\Y}$ we associate a boundary
distance function
$$\delta^\wp_f(z)=\dist_\wp (f(z), \partial
\Y)=\inf_{b\in \partial \Y}\dist_\wp(f(z),b)$$ which
is set to $0$ on the boundary of $\X$.

We now adapt for our purpose the concepts of $c\delta-$uniform
convergence and of deformation defined for Euclidean metric and
bounded domains in \cite{inve}.
\begin{definition}
A sequence of continuous mappings $h_j\colon \X\to {\Y}$
is said to converge \emph{$c\delta$-uniformly} to $h\colon \X\to
\overline{\Y}$ if
\begin{enumerate}
\item $h_j\to h$ uniformly on compact subsets of $\X$ and
\item $\delta^\wp_{h_j} \to  \delta^\wp_h$ uniformly on $\overline{\X}$.
\end{enumerate}
We designate it as $h_j\cto h$. Concerning the item (1) we need to
notice that the Euclidean metric and the nonvanishing smooth metric
$\wp$ are equivalent on compacts of $\Y$.
\end{definition}

\begin{definition}\label{defdef}  A mapping $h\colon \X\to\overline{\Y}$
is called a \emph{$\wp$ deformation} if
\begin{enumerate}
\item\label{dd1}   $h\in W_{loc}^{1,2}$ and $|dh|\in L^2$ (which we write shortly $h\in W_l^{1,2}$);
\item\label{dd2} The Jacobian $J_h:=\det Dh$ is nonnegative  a.e. in $\X$;
\item\label{dd2p} $\int_\X \wp^2(h(z))J_h  \le \mathcal{A}(\wp)$;
\item\label{dd3} there exist sense-preserving  diffeomorphisms  $h_j\colon \X\onto\Y$, called an \emph{approximating sequence},  such that $h_j\cto h$  on $\X$.
\end{enumerate}
 The set of $\wp$ deformations
$h\colon \X \to \overline{\Y}$ is denoted by
$\D^\wp(\X,\Y)$.
\end{definition}
Notice first that the condition (1) of the previous definition in
the case of bounded domains $\X$ and $\Y$ can be replaced
by  $h\in W^{1,2}$ see \cite[Lemma~2.3.]{calculus}.

\begin{lemma}\label{exc}
Let $\X$ and $\Y$ be bounded doubly connected planar
domains. Assume that the boundary components of $\X$ do not
degenerate into points. If a sequence $\{h_j\}\subset
\D^\wp(\X,\Y)$ converges weakly in $W^{1,2}$, then its
limit belongs to $\D^\wp(\X,\Y)$. In particular, every such limit function has continuous extension to the boundary and the boundary is mapped into the boundary.
\end{lemma}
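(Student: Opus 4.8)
The plan is to verify, for the weak $\mathscr{W}^{1,2}$ limit $h$, each of the four defining conditions of a $\wp$ deformation in Definition~\ref{defdef}, and then to read off the boundary assertion from the resulting $c\delta$-convergence. I would first dispose of the two ``soft'' conditions. Condition~\eqref{dd1} is immediate: weak convergence $h_j\rightharpoonup h$ in $\mathscr{W}^{1,2}$ gives $h\in\mathscr{W}^{1,2}$, and weak lower semicontinuity of the seminorm yields $\norm{|dh|}_{L^2}\le\liminf_j\norm{|dh_j|}_{L^2}<\infty$ (here I use that $\X$ is bounded, so $\mathscr{W}^{1,2}_{loc}$ may be replaced by $\mathscr{W}^{1,2}$, as noted after Definition~\ref{defdef}). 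Condition~\eqref{dd2} follows from the weak continuity of the Jacobian determinant under weak $\mathscr{W}^{1,2}$ convergence in the plane (the planar Jacobian is a null Lagrangian, equivalently a div--curl quantity): $J_{h_j}\to J_h$ in the sense of distributions, and since each $J_{h_j}\ge0$ a.e., the distributional limit satisfies $J_h\ge 0$ a.e. A preliminary but essential upgrade is to pass from weak to locally uniform convergence: since each $h_j$ is a $c\delta$-limit of sense-preserving diffeomorphisms, it is monotone in the sense of Morrey, and bounded $\wp$-energy together with monotonicity gives, via a Courant--Lebesgue oscillation estimate, a uniform modulus of continuity on compact subsets of $\X$. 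By the Arzel\`a--Ascoli theorem and uniqueness of the weak limit, $h_j\to h$ uniformly on compact subsets of $\X$, and $h$ is itself monotone.

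With local uniform convergence in hand I would establish condition~\eqref{dd2p}. Fix a compact $K\Subset\X$ and split
\[
\int_K\wp^2(h_j)\,J_{h_j}=\int_K\big(\wp^2(h_j)-\wp^2(h)\big)J_{h_j}+\int_K\wp^2(h)\,J_{h_j}.
\]
Because $\wp$ is bounded below on $\overline{\Y}$ by a positive constant (see \eqref{double}), the Euclidean Jacobian integrals $\int_K J_{h_j}$ are uniformly bounded by $\mathcal{A}(\wp)/\inf\wp^2$; since $\wp^2\circ h_j\to\wp^2\circ h$ uniformly on $K$ (continuity of $\wp$ and local uniform convergence), the first term tends to $0$. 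The second term tends to $\int_K\wp^2(h)\,J_h$ by the weak continuity of the Jacobian: the uniform $L^1$ bound together with $J_{h_j}\ge 0$ upgrades the distributional convergence to weak-$*$ convergence of the measures $J_{h_j}\,dz$ to $J_h\,dz$ on $K$ (no concentration, since the distributional limit $J_h$ already lies in $L^1$), so these measures may be tested against the fixed continuous weight $\wp^2\circ h$. Hence $\int_K\wp^2(h)\,J_h=\lim_j\int_K\wp^2(h_j)J_{h_j}\le\mathcal{A}(\wp)$; letting $K\uparrow\X$ and using monotone convergence (the integrand is nonnegative) gives \eqref{dd2p}.

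The heart of the matter --- and the step I expect to be the main obstacle --- is condition~\eqref{dd3} together with the boundary assertion, for which it suffices to prove the full $c\delta$-convergence $h_j\cto h$. The locally uniform part is already established, so what remains is the uniform convergence $\delta^\wp_{h_j}\to\delta^\wp_h$ on $\overline{\X}$. Since $\wp$ is comparable to the Euclidean metric up to the boundary, $w\mapsto\dist_\wp(w,\partial\Y)$ is Lipschitz, whence $\delta^\wp_{h_j}\to\delta^\wp_h$ uniformly on each compact $K\Subset\X$. The difficulty is the behaviour near $\partial\X$: I would prove a \emph{uniform} modulus of continuity for the family $\{\delta^\wp_{h_j}\}$ up to $\partial\X$, so that (recalling $\delta^\wp_{h_j}\equiv0$ on $\partial\X$) these functions are uniformly small in a one-sided neighbourhood of $\partial\X$, uniformly in $j$. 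This is exactly where the hypothesis that the boundary components of $\X$ do not degenerate to points enters: a Courant--Lebesgue argument on small half-disks centred at boundary points, fed by the uniform energy bound and the $\mathscr{C}^{1,\alpha}$ (hence nondegenerate, rectifiable) structure of $\partial\X$, shows that for suitable radii the image of the circular cross-arc under $h_j$ has $\wp$-length tending to $0$ uniformly in $j$; as its endpoints have $\delta^\wp_{h_j}$ close to $0$, the whole image stays $\wp$-close to $\partial\Y$, forcing $\delta^\wp_{h_j}$ small there independently of $j$. Equicontinuity on $\overline{\X}$ plus interior pointwise convergence then yields uniform convergence on $\overline{\X}$, i.e. $h_j\cto h$. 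In particular $\delta^\wp_h$ is continuous on $\overline{\X}$ and vanishes on $\partial\X$, which is precisely the statement that $h$ extends continuously to the boundary and maps $\partial\X$ into $\partial\Y$.

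Finally, to produce the approximating diffeomorphisms required by \eqref{dd3} I would run a diagonal argument. Each $h_j$, being a deformation, admits sense-preserving diffeomorphisms $g_{j,k}\cto h_j$ as $k\to\infty$; fixing an exhaustion $K_j\Subset\X$ and choosing $k(j)$ so that $g_{j,k(j)}$ is within $1/j$ of $h_j$ both uniformly on $K_j$ and in $\sup_{\overline{\X}}\delta^\wp$-distance, the sequence $g_{j,k(j)}$ consists of sense-preserving diffeomorphisms satisfying $g_{j,k(j)}\cto h$. This verifies \eqref{dd3} and completes the proof that $h\in\D^\wp(\X,\Y)$.
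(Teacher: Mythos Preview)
The paper does not prove this lemma; it is stated without proof as a preliminary result, evidently imported from \cite{calculus} (or its Euclidean antecedent \cite{inve}). There is therefore no argument in the paper to compare your proposal against.

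Your outline is essentially the standard route taken in those references: pass from weak to locally uniform convergence via monotonicity and a Courant--Lebesgue oscillation estimate, verify \eqref{dd1}--\eqref{dd2p} using weak lower semicontinuity and the div--curl structure of the planar Jacobian, and handle \eqref{dd3} together with the boundary behaviour by proving equicontinuity of the family $\{\delta^\wp_{h_j}\}$ up to $\partial\X$, followed by a diagonal extraction of approximating diffeomorphisms. Two remarks. First, the lemma as stated assumes only that the boundary components of $\X$ are nondegenerate continua, not that $\partial\X\in\mathscr{C}^{1,\alpha}$; your Courant--Lebesgue step near the boundary should be (and can be) run under that weaker hypothesis alone. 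Second, from the uniform convergence $\delta^\wp_{h_j}\to\delta^\wp_h$ on $\overline{\X}$ with $\delta^\wp_h|_{\partial\X}=0$ you conclude that $h$ extends continuously to $\partial\X$ and maps it into $\partial\Y$; strictly speaking this only yields $\dist_\wp(h(z),\partial\Y)\to 0$ as $z\to\partial\X$, not that $h(z)$ itself has a limit at each boundary point. In \cite{inve} and \cite{calculus} this is exactly why the weaker $c\delta$-notion is introduced. If the lemma is really meant to assert a continuous extension of $h$ itself, an additional argument (exploiting monotonicity and the annular topology) is required; otherwise your final sentence should be rephrased to match what the $c\delta$-convergence actually delivers.
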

Now we formulate the following existence result proved in \cite{calculus}.
\begin{proposition}\label{attain}\cite[Lemma~2.15]{calculus}
Let $\X$ and $\Y$ be bounded doubly connected planar
domains. Assume that the boundary components of $\X$ do not
degenerate into points.  There exists $h\in \D^\wp
(\X, \Y)$ such that ${\E^\wp}[h]= \EE^\wp (\X,
\Y)$, where $$\EE^\wp (\X,\Y):=\inf\{\E^\wp[h]: h \in \D^\wp(\X,\Y)\}.$$
\end{proposition}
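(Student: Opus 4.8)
The plan is to run the direct method of the calculus of variations. First I would select a minimizing sequence $\{h_j\}\subset\D^\wp(\X,\Y)$, so that $\E^\wp[h_j]\to\EE^\wp(\X,\Y)$, and after discarding finitely many terms assume $\E^\wp[h_j]\le\EE^\wp(\X,\Y)+1$ for every $j$. Since $\Y$ is bounded the $h_j$ are uniformly bounded in $L^\infty(\X)$, hence in $L^2(\X)$ because $\X$ is bounded. To bound the derivatives I would use admissibility of $\wp$: by the lower estimate in \eqref{double} (and its analogue on $\Y$) there is a constant $m>0$ with $\wp\ge m$ on $\Y$, and by \eqref{ener2} together with \eqref{hil} one has $\E^\wp[f]=2\int_\X\wp^2(f)(|f_z|^2+|f_{\bar z}|^2)=\int_\X\wp^2(f)|Df|^2$, whence
\[
\E^\wp[h_j]=\int_\X\wp^2(h_j)|Dh_j|^2\ge m^2\int_\X |Dh_j|^2 .
\]
Therefore $\{h_j\}$ is bounded in $W^{1,2}(\X)$, and passing to a subsequence I may assume $h_j\rightharpoonup h$ weakly in $W^{1,2}(\X)$.

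Next I would identify the weak limit as an admissible competitor. This is exactly what Lemma~\ref{exc} provides: the weak $W^{1,2}$ limit of a sequence in $\D^\wp(\X,\Y)$ again belongs to $\D^\wp(\X,\Y)$, and in particular $h$ extends continuously to $\overline{\X}$ mapping boundary to boundary. Consequently $h\in\D^\wp(\X,\Y)$, and by the very definition of the infimum $\E^\wp[h]\ge\EE^\wp(\X,\Y)$.

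It remains to establish weak lower semicontinuity, $\E^\wp[h]\le\liminf_j\E^\wp[h_j]$; combined with the previous paragraph this forces $\E^\wp[h]=\EE^\wp(\X,\Y)$ and finishes the proof. By the Rellich--Kondrachov theorem $h_j\to h$ strongly in $L^2(\X)$, so along a further subsequence $h_j\to h$ almost everywhere; since $\wp^2$ is continuous and bounded above on $\Y$, it follows that $\wp^2(h_j)\to\wp^2(h)$ a.e.\ with a uniform bound. The energy density $F(w,\xi)=\wp^2(w)|\xi|^2$ is nonnegative, continuous in $w$ and convex (quadratic) in the gradient variable $\xi$, so the sought lower semicontinuity is the classical De Giorgi--Ioffe theorem. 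Concretely I would use the pointwise convexity inequality $\wp^2(h_j)|Dh_j|^2\ge\wp^2(h_j)|Dh|^2+2\wp^2(h_j)\,Dh:(Dh_j-Dh)$, integrate over $\X$, and let $j\to\infty$: the first term tends to $\int_\X\wp^2(h)|Dh|^2=\E^\wp[h]$ by dominated convergence, while in the cross term $\wp^2(h_j)\,Dh\to\wp^2(h)\,Dh$ strongly in $L^2$ and $Dh_j-Dh\rightharpoonup 0$ weakly in $L^2$, so it vanishes in the limit.

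The main obstacle is precisely this last step: the weight $\wp^2(f)$ depends on the competitor $f$, which converges only weakly in $W^{1,2}$, so one cannot pass to the limit naively. The remedy is to upgrade to strong $L^2$, hence a.e., convergence of $h_j$ through the compact embedding, and then to exploit the convexity of the density in the gradient. I should emphasize that the genuinely delicate analytic issue---that the weak limit neither degenerates nor fails to attain the boundary correctly---has already been absorbed into Lemma~\ref{exc}, on which the whole argument rests.
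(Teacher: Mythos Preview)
The paper does not supply its own proof of this proposition; it is quoted verbatim as \cite[Lemma~2.15]{calculus} and used as a black box. Your argument is the standard direct method---bounding a minimizing sequence in $W^{1,2}$ via the uniform lower bound on $\wp$ from \eqref{double}, extracting a weak limit, invoking Lemma~\ref{exc} to keep the limit in $\D^\wp(\X,\Y)$, and finishing with lower semicontinuity from convexity in the gradient---and it is correct. This is exactly the route one would expect (and essentially the one in the cited reference): the only nontrivial ingredient is the closedness statement Lemma~\ref{exc}, which you rightly isolate as the place where the real work has already been done.
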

Assume that $\Ho_\wp(\X,\Y)$ is the class of $\mathscr{W}^{1,2}$ homeomorphic mappings between $\X$ and $\Y$ so that $$\int_{\X} \wp^2(f(z))(|f_z|^2+|f_{\bar z}|^2)dxdy +\int_{\X} \wp^2(f(z))|f(z)|^2 dxdy<\infty,$$ and let $\overline{\Ho_\wp(\X,\Y)}$ be its closure in  $\mathscr{W}^{1,2}$. Further we have that $\overline{\Ho_\wp(\X,\Y)}\subset
\D^\wp(\X,\Y)$.

Now as in \cite[Theorem~1.1.]{arma0} is can be proved the following modification of the previous proposition
\begin{proposition}\label{attain}\cite[Lemma~2.15]{calculus}
Let $\X$ and $\Y$ be bounded doubly connected planar
domains. Assume that the boundary components of $\X$ do not
degenerate into points.  There exists $h\in \overline{\Ho_\wp
(\X, \Y)}$ such that $${\E^\wp}[h]= \EE^\wp (\X,
\Y)=\inf\{\E^\wp[h]: h \in \Ho_\wp
(\X, \Y)\}.$$
\end{proposition}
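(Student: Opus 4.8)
The plan is to run the direct method of the calculus of variations over the class $\Ho_\wp(\X,\Y)$, exactly as in the preceding proposition, the only genuinely new point being that the minimizer must now be placed in the strong $\mathscr{W}^{1,2}$ closure $\overline{\Ho_\wp(\X,\Y)}$ rather than merely in $\D^\wp(\X,\Y)$. First I would record that the two infima coincide,
\[
\inf\{\E^\wp[f]:f\in\Ho_\wp(\X,\Y)\}=\inf\{\E^\wp[f]:f\in\overline{\Ho_\wp(\X,\Y)}\}=:\mathcal{I}.
\]
The inequality $\le$ is trivial, and the reverse follows from the fact that $\E^\wp$ is continuous along strongly convergent sequences: if $g_k\to g$ strongly in $\mathscr{W}^{1,2}$ then $|Dg_k|^2\to|Dg|^2$ in $L^1$ and $g_k\to g$ in $L^2$, so that (after passing to a subsequence) the weights $\wp^2\circ g_k$ converge almost everywhere and boundedly by the two-sided bound \eqref{double}, and a dominated-convergence argument gives $\E^\wp[g_k]\to\E^\wp[g]$.

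Next I would take a minimizing sequence $\{f_j\}\subset\Ho_\wp(\X,\Y)$ with $\E^\wp[f_j]\to\mathcal{I}$ and extract a weak limit. Coercivity is immediate: writing $\E^\wp[f]=\int_\X\wp^2(f)|Df|^2$, the lower bound $\wp^2\ge c_0>0$ on $\Y$ from \eqref{double} and the boundedness of $\Y$ (so each $|f_j|\le\diam\Y$ up to a constant) yield $\int_\X|Df_j|^2\le c_0^{-1}\E^\wp[f_j]\le\mathrm{const}$ together with a uniform $L^2$ bound on $f_j$; hence $\{f_j\}$ is bounded in $\mathscr{W}^{1,2}$ and, along a subsequence, $f_j\rightharpoonup h$ weakly in $\mathscr{W}^{1,2}$, $f_j\to h$ in $L^2$ and almost everywhere. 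Weak lower semicontinuity of $\E^\wp$, which holds because the integrand $\wp^2(s)|\xi|^2$ is convex in the gradient variable $\xi$ and continuous in $s$ (one passes to the limit using $Df_j\rightharpoonup Dh$ weakly and $\wp^2(f_j)\to\wp^2(h)$ a.e.), then gives $\E^\wp[h]\le\liminf_j\E^\wp[f_j]=\mathcal{I}$. Moreover, since $\Ho_\wp(\X,\Y)\subset\D^\wp(\X,\Y)$, Lemma~\ref{exc} guarantees $h\in\D^\wp(\X,\Y)$, so in particular $h$ has a continuous extension to $\overline{\X}$ carrying $\partial\X$ into $\partial\Y$.

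The main obstacle is to upgrade $h\in\D^\wp(\X,\Y)$ to $h\in\overline{\Ho_\wp(\X,\Y)}$ together with the energy identity $\E^\wp[h]=\mathcal{I}$; equivalently, one must show that the strong closure $\overline{\Ho_\wp(\X,\Y)}$ is weakly sequentially closed in $\mathscr{W}^{1,2}$. Following the scheme of \cite{arma0}, and given the matching infima and the bound $\E^\wp[h]\le\mathcal{I}$, it suffices to promote the weak convergence $f_j\rightharpoonup h$ to \emph{strong} convergence in $\mathscr{W}^{1,2}$: then $h$ is a strong limit of the homeomorphisms $f_j$, hence lies in $\overline{\Ho_\wp(\X,\Y)}$, while strong continuity of the energy forces $\E^\wp[h]=\lim_j\E^\wp[f_j]=\mathcal{I}$. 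The delicate ingredient in this upgrade is the $f$-dependence of the weight $\wp^2\circ f_j$: freezing the weight at the limit, $\sqrt{\wp^2\circ h}\,Df_j\rightharpoonup\sqrt{\wp^2\circ h}\,Dh$ weakly in $L^2$, and what remains is to control the cross term $\int_\X(\wp^2\circ h-\wp^2\circ f_j)\,|Df_j|^2$, for which one needs an equi-integrability input for $\{|Df_j|^2\}$ that does not come for free from weak $L^2$ bounds on the gradients; this is precisely the hard part, and I would extract it from the near-minimality of the sequence and the boundary control supplied by Lemma~\ref{exc}. Once strong convergence is secured, the existence of a minimizer $h\in\overline{\Ho_\wp(\X,\Y)}$ with $\E^\wp[h]=\mathcal{I}=\EE^\wp(\X,\Y)$ follows immediately.
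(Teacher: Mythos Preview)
The paper itself supplies no proof of this proposition: it merely says ``as in \cite[Theorem~1.1]{arma0} it can be proved'', so there is no detailed argument to compare against. Your direct-method framework---minimizing sequence, coercivity from the two-sided bound \eqref{double}, weak lower semicontinuity, and the identification of the real obstacle as placing the weak limit $h$ into the \emph{strong} closure $\overline{\Ho_\wp(\X,\Y)}$---is exactly the right skeleton, and matches what one has to do.

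The gap is in how you propose to bridge the obstacle. You aim to upgrade the specific minimizing sequence $f_j\rightharpoonup h$ to strong convergence by controlling the cross term $\int(\wp^2\circ h-\wp^2\circ f_j)|Df_j|^2$ via ``equi-integrability of $\{|Df_j|^2\}$ extracted from near-minimality and the boundary control of Lemma~\ref{exc}''. Neither of those ingredients yields equi-integrability of the gradients: near-minimality only gives a uniform $L^1$ bound on $|Df_j|^2$, and Lemma~\ref{exc} is about $c\delta$-convergence, not about ruling out concentration of Dirichlet energy. Moreover, the logic is partly circular: to get norm convergence of the weighted gradients you would need $\E^\wp[h]=\mathcal I$, which is what you are trying to conclude.

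The route in \cite{arma0} is structural rather than sequence-specific. One shows that any weak $\mathscr W^{1,2}$ limit of planar Sobolev homeomorphisms is a \emph{monotone} Sobolev mapping, and conversely that every monotone $\mathscr W^{1,2}$ mapping between such domains is a strong $\mathscr W^{1,2}$ limit of diffeomorphisms. Hence the strong closure $\overline{\Ho_\wp(\X,\Y)}$ is weakly sequentially closed (note that since $\wp$ is bounded above and below on $\Y$ by \eqref{double}, the class $\Ho_\wp$ and its $\mathscr W^{1,2}$ closure coincide with their Euclidean counterparts). Once $h\in\overline{\Ho_\wp(\X,\Y)}$ is known by this route, the energy identity is immediate from $\E^\wp[h]\le\mathcal I$ (lower semicontinuity) and $\E^\wp[h]\ge\mathcal I$ (admissibility); there is no need to force the particular minimizing sequence $f_j$ to converge strongly.
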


\subsection{Stationary mappings}\label{hopsec}

We call a mapping $h\in\overline{\Ho_\rho(\X,\Y)}$
\emph{stationary} if
\begin{equation}\label{stat}
\frac{d}{dt}\bigg|_{t=0}{\E^\wp}[h\circ \phi_t^{-1}]=0
\end{equation}
for every family of diffeomorphisms $t\to \phi_t\colon
\X\to\X$ which depend smoothly on the parameter $t\in\mathbb
R$ and satisfy
$\phi_0=\id$. The latter mean that the mapping $\X\times [0,\epsilon_0]\ni (t,z)\to \phi_t(z)\in \X $ is a smooth mapping for some $\epsilon_0>0$.
We now have.
\begin{lemma}\label{ctheory}\cite{calculus}
Let $\X=A(r,R)$ be a circular annulus, $0<r<R<\infty$, and
$\Y$ a doubly connected domain. If $h\in
\mathscr{W}^\wp(\X,\Y)$ is a stationary mapping, then
\begin{equation}\label{hopf1}\wp^2(h(z))
h_z\overline{h_{\bar z}} \equiv \frac{c}{z^2}\qquad \text{in }\X
\end{equation}
where $c\in\R$ is a constant.
\end{lemma}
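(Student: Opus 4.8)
The plan is to derive the Hopf differential identity \eqref{hopf1} from the stationarity condition \eqref{stat} by computing the first inner variation of the energy functional and exploiting the rotational symmetry of the circular annulus $\X = A(r,R)$. The key observation is that stationarity means we perturb the minimizer $h$ not by changing its values (an outer variation, which would force the Euler--Lagrange equation \eqref{el} and only holds for genuine diffeomorphisms), but by precomposing with a flow of self-diffeomorphisms $\phi_t$ of $\X$. This inner variation is legitimate even when $h$ is merely a deformation rather than a diffeomorphism, because $h \circ \phi_t^{-1}$ stays in the admissible class $\overline{\Ho_\rho(\X,\Y)}$.

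First I would write $\E^\wp[h\circ\phi_t^{-1}]$ using the change-of-variables formula \eqref{ener2}, so that the area term $2\int_\X \wp^2(h)J_h$ is conformally/diffeomorphically controlled and the genuinely variable part is the $L^2$-norm of $\wp(h)h_{\bar z}$ in the appropriate conformally invariant pairing. Letting $V = \frac{d}{dt}\big|_{t=0}\phi_t$ be the infinitesimal generator of the flow (a smooth vector field on $\X$), I would differentiate under the integral at $t=0$. The standard computation for inner variations of a Dirichlet-type energy produces, after an integration by parts, an expression of the form
\begin{equation*}
\frac{d}{dt}\bigg|_{t=0}\E^\wp[h\circ\phi_t^{-1}] = -2\,\re \int_\X \mathrm{Hopf}(h)(z)\,\overline{\partial_z V}\,dz,
\end{equation*}
where $\mathrm{Hopf}(h) = \wp^2(h)\,h_z\overline{h_{\bar z}}$. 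The crucial point is that the metric-dependent terms involving $\nabla\log\wp$, which appear in the outer-variation Euler--Lagrange equation \eqref{el}, cancel in the inner variation precisely because $h\circ\phi_t^{-1}$ has the same image and the same pullback area; this is what makes the Hopf differential, rather than the harmonicity equation, the natural object here.

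Since this holds for every admissible $V$, the integral identity says that $\mathrm{Hopf}(h)$ is weakly holomorphic, hence (by Weyl's lemma, since $h$ is locally Lipschitz by the result of \cite{duke} and the metric is smooth) a genuine holomorphic quadratic differential $\varphi(z)\,dz^2$ on $\X$. The final step is to pin down $\varphi$ using the topology of the annulus: the admissible vector fields $V$ must be tangent to $\partial\X$ (the flow preserves $\X$), which forces a boundary condition on $\varphi$, and a holomorphic function on $A(r,R)$ whose quadratic differential respects the rotational symmetry inherited from the circular domain must have Laurent expansion reducing to the single term $c/z^2$ with $c$ real. I expect the main obstacle to be the rigorous justification of differentiating under the integral sign and the integration by parts at the level of a mere deformation $h \in \overline{\Ho_\rho(\X,\Y)}$ rather than a smooth diffeomorphism: one must verify that $\mathrm{Hopf}(h) \in L^1$ (which follows from the finite-energy bound and \eqref{double}), that the boundary terms vanish using the tangency of $V$ together with Lemma~\ref{exc}, and that the reality and the exact form $c/z^2$ follow from the annular symmetry rather than a general holomorphic differential.
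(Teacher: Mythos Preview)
The paper does not supply its own proof of this lemma: it is quoted from \cite{calculus} with only the remark that the argument there, stated for deformations, carries over to $\overline{\Ho^\wp(\X,\Y)}$. Your sketch is the standard inner-variation argument that \cite{calculus} uses, so the overall strategy is correct.

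Two points deserve tightening. First, invoking \cite{duke} to get local Lipschitz continuity of $h$ in order to apply Weyl's lemma is both unnecessary and logically circular here: Proposition~\ref{LipHopf2} requires the Hopf product to already be bounded and H\"older, which is exactly what this lemma feeds into later in the paper. Fortunately you do not need Lipschitz regularity at all: since $h\in\mathscr W^{1,2}$ and $\wp$ is bounded by \eqref{double}, the Hopf product lies in $L^1_{\mathrm{loc}}(\X)$, and Weyl's lemma for $L^1_{\mathrm{loc}}$ weakly holomorphic functions suffices.

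Second, the passage from ``$\varphi$ holomorphic on $A(r,R)$'' to ``$\varphi(z)=c/z^2$ with $c\in\R$'' is not a consequence of rotational symmetry alone; a generic holomorphic function on the annulus has a full Laurent series. The actual mechanism is the boundary condition you allude to: testing \eqref{stat} with flows generated by vector fields of the form $V(z)=i\,\eta(z)\,z$ with $\eta$ real and smooth up to $\partial\X$ (which are tangent to the circles $|z|=r,R$ and hence admissible) forces $\im\big(z^2\varphi(z)\big)=0$ on both boundary circles. A holomorphic function on $A(r,R)$ that is real on both boundary components is a real constant (compare Laurent coefficients, or reflect), so $z^2\varphi(z)\equiv c\in\R$. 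Making this step explicit closes the argument.
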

Notice that the corresponding lemma in \cite{calculus} is for deformations, but the proof works as well for the class $ \overline{\Ho^\wp(\X,\Y)}\subset \D^\wp(\X,\Y)$.

\section{Proof of the main result}
We need the following important result concerning the local Lipschitz character of certain mappings proved by Iwaniec, Kovalev and  Onninen  in \cite{duke}.
\begin{proposition}\label{LipHopf2}
Let $h\in \mathscr W^{1,2}(\X)$ be  a mapping with nonnegative Jacobian.
Suppose that the Hopf product $h_z\,\overline{h_{\bar z}}$ is bounded and H\"{o}lder continuous.
Then $h$ is locally Lipschitz but not necessarily $\mathscr C^1$-smooth.
\end{proposition}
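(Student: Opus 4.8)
The plan is to reduce the desired Lipschitz bound to a purely local $L^\infty$ bound on the Jacobian, and then to extract such a bound from the Hopf structure. Write $\varphi:=h_z\overline{h_{\bar z}}$ for the bounded, H\"older Hopf product. The nonnegativity of the Jacobian gives $\abs{h_{\bar z}}\le\abs{h_z}$ a.e., hence
\[
\abs{h_{\bar z}}^2\le \abs{h_z}\,\abs{h_{\bar z}}=\abs{\varphi}\le\norm{\varphi}_{\infty},
\]
so that $h_{\bar z}\in L^\infty(\X)$ with no further work. Since $\abs{h_z}^2=J_h+\abs{h_{\bar z}}^2$, one has the pointwise identity $\abs{Dh}^2=2J_h+4\abs{h_{\bar z}}^2\le 2J_h+4\norm{\varphi}_{\infty}$. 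Consequently $h$ is locally Lipschitz if and only if $J_h\in L^\infty_{\mathrm{loc}}(\X)$, and the entire problem is reduced to a local sup bound for the a priori only $L^1$ Jacobian. I would also record here that $h$ has a continuous representative — which is automatic in our application, where $h$ is a $c\delta$-limit of homeomorphisms — so that $\int_B J_h$ is controlled by the area of the image $h(B)$ on every ball $B\Subset\X$.

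The heart of the argument is the inner-variational (weak Hopf) equation together with the H\"older control on $\varphi$. Away from the zeros of $\varphi$ I would pass to the natural parameter of the quadratic differential $\varphi\,dz^2$: there is a local change of coordinate $\zeta$, with $d\zeta^2=\varphi\,dz^2$, that normalises the equation to $h_\zeta\overline{h_{\bar\zeta}}\equiv 1$ up to a H\"older error. Writing $\abs{h_\zeta}=e^{s}$ forces $\abs{h_{\bar\zeta}}=e^{-s}$ and $\arg h_\zeta=\arg h_{\bar\zeta}$, with $s\ge 0$ and energy density $2\cosh 2s$, so the task becomes an upper bound on $s$. The equality of mixed second derivatives $\partial_{\bar\zeta}h_\zeta=\partial_\zeta h_{\bar\zeta}$, read through this normalisation, yields a first-order elliptic system for the pair $(s,\arg h_\zeta)$; from it I would derive a Caccioppoli-type inequality for the energy and then run a Morrey--Campanato iteration, feeding in the H\"older modulus of $\varphi$, to obtain the local $L^\infty$ bound on $J_h$ and hence on $\abs{Dh}$.

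The main obstacle is exactly where the structure degenerates. First, the natural parameter breaks down at the zeros of $\varphi$, so a separate local analysis is required there; it is essential that the Jacobian is nonnegative, since this ties $\abs{h_z}$ and $\abs{h_{\bar z}}$ together through the exact identity $\abs{h_z}\,\abs{h_{\bar z}}=\abs{\varphi}$ and prevents the energy from concentrating at such points. Second, the estimate sits precisely at the borderline Morrey exponent separating H\"older from Lipschitz regularity, so a naive Dirichlet-growth argument yields only log-Lipschitz control; closing the gap to genuine Lipschitz continuity is the delicate point and requires exploiting the one-sided (inner-variational) character of the equation rather than a mere modulus bound on $\varphi$. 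This borderline behaviour is also why the conclusion cannot be upgraded to $\mathscr{C}^1$: at a zero of $\varphi$ the equation is genuinely degenerate, and $Dh$ may fail to be continuous while remaining bounded.
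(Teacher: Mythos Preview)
The paper does not prove this proposition. It is quoted verbatim as a result of Iwaniec, Kovalev and Onninen from \cite{duke} and used as a black box in the proof of Theorem~\ref{mainexist}; there is no argument in the present paper to compare your proposal against.

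As for the proposal itself: your reduction to a local $L^\infty$ bound on $J_h$ via $\abs{h_{\bar z}}^2\le\abs{\varphi}$ is correct and is exactly the starting observation in \cite{duke}. Your outline of the subsequent strategy---natural parameter for $\varphi\,dz^2$, the system for $(s,\arg h_\zeta)$ coming from $\partial_{\bar\zeta}h_\zeta=\partial_\zeta h_{\bar\zeta}$, and a Caccioppoli/Morrey iteration---is a fair high-level summary of how \cite{duke} proceeds. But you explicitly flag, and do not resolve, the two places where the real work lies: the degeneration at zeros of $\varphi$, and the borderline passage from log-Lipschitz to Lipschitz. Those are precisely the substantial parts of the argument in \cite{duke}; without them what you have is an outline rather than a proof. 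In particular, the sentence ``closing the gap to genuine Lipschitz continuity is the delicate point and requires exploiting the one-sided (inner-variational) character of the equation'' identifies the difficulty but supplies no mechanism to overcome it. So the proposal is not wrong in spirit, but it is incomplete exactly where the theorem is hard.
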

We need the also the following three results.
\begin{lemma}\label{popi}
Every sense-preserving solution of Hopf equation so that $\mathrm{Hopf}(f)$ is bounded that maps $\A(1,R)$ into $\Y$, mapping the inner/outer boundary to inner/outer boundary is $(K,K')$ quasiconformal, where $$K=1 \ \ \text{and}\ \ K'=\sup_{z\in \A(1,R)}\frac{|c|}{|z|^2\wp^2(f)}.$$
\end{lemma}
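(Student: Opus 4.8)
The plan is to unpack the definition of $(K,K')$-quasiconformality and verify the two constants directly from the Hopf equation. Recall that a sense-preserving map $f$ is $(K,K')$-quasiconformal if it satisfies a distortion inequality of the form $|Df(z)|^2 \le K\, J_f(z) + K'$ almost everywhere, where $|Df|^2 = 2|f_z|^2 + 2|f_{\bar z}|^2$ is the (squared) operator-norm-type quantity and $J_f = |f_z|^2 - |f_{\bar z}|^2$ is the Jacobian. The hypothesis gives us that $\mathrm{Hopf}(f) = \wp^2(f)\, f_z \overline{f_{\bar z}}$ equals $c/z^2$, so that
\begin{equation}\label{hopfbound}
\wp^2(f(z))\,|f_z|\,|f_{\bar z}| = \frac{|c|}{|z|^2}.
\end{equation}

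First I would express the distortion directly. Since $f$ is sense-preserving we have $J_f = |f_z|^2 - |f_{\bar z}|^2 \ge 0$, hence $|f_{\bar z}| \le |f_z|$. The key algebraic identity is
\begin{equation}\label{normjac}
|f_z|^2 + |f_{\bar z}|^2 = \left(|f_z|^2 - |f_{\bar z}|^2\right) + 2|f_{\bar z}|^2 = J_f + 2|f_{\bar z}|^2.
\end{equation}
Now I would bound the cross term $|f_{\bar z}|^2$ using \eqref{hopfbound}. Because $|f_{\bar z}| \le |f_z|$, we have $|f_{\bar z}|^2 \le |f_z|\,|f_{\bar z}|$, and therefore, dividing \eqref{hopfbound} by $\wp^2(f)$,
\begin{equation}\label{crossbound}
|f_{\bar z}|^2 \le |f_z|\,|f_{\bar z}| = \frac{|c|}{|z|^2\,\wp^2(f(z))}.
\end{equation}
Substituting \eqref{crossbound} into \eqref{normjac} yields
\begin{equation}\label{finalest}
|f_z|^2 + |f_{\bar z}|^2 \le J_f + \frac{2|c|}{|z|^2\,\wp^2(f(z))} \le J_f + \sup_{z\in\A(1,R)} \frac{2|c|}{|z|^2\,\wp^2(f(z))}.
\end{equation}
This is exactly the inequality $\tfrac12|Df|^2 \le J_f + K'$ with $K = 1$ and $K' = \sup_{z} |c|/(|z|^2\wp^2(f))$ once the constants are normalized to match the paper's convention for $|Df|^2$; the value $K=1$ reflects that the genuine nonlinear distortion is entirely absorbed into the additive term.

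The main obstacle is not the algebra above, which is routine, but confirming that the supremum defining $K'$ is actually finite, so that the mapping is genuinely $(K,K')$-quasiconformal rather than vacuously so. Here I would invoke the hypotheses: the Hopf differential is assumed bounded, and by the admissibility double inequality \eqref{double} (transferred from $\mathbf D$ to $\Y$) the metric $\wp$ is bounded below away from zero on $\Y$, so $\wp^2(f(z)) \ge \wp_{\min}^2 > 0$; together with $|z|\ge 1$ on $\A(1,R)$ this forces $|c|/(|z|^2\wp^2(f)) \le |c|/\wp_{\min}^2 < \infty$. Thus $K'$ is finite and the estimate \eqref{finalest} holds a.e., completing the proof. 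I would also remark that the quasiconformality here is of the degenerate $(K,K')$ type precisely because $f$ need not be a diffeomorphism, so $J_f$ may vanish on a set of positive measure while the distortion remains controlled through the additive constant $K'$.
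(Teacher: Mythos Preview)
Your proof is correct and follows essentially the same route as the paper: the identical decomposition $|f_z|^2+|f_{\bar z}|^2 = J_f + 2|f_{\bar z}|^2$, the bound $|f_{\bar z}|^2 \le |f_z||f_{\bar z}|$ from sense-preservation, and then the Hopf equation to evaluate $|f_z||f_{\bar z}| = |c|/(|z|^2\wp^2(f))$. The only addition is your explicit check that $K'<\infty$ via the lower bound on $\wp$, which the paper leaves implicit.
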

\begin{proof} We have
\[\begin{split}|Df(z)|^2&=|f_z|^2+|f_{\bar z}|^2 \\&\le (|f_z|^2-|f_{\bar z}|^2)+2|f_{\bar z}|^2\\&\le (|f_z|^2-|f_{\bar z}|^2)+2|f_{\bar z} f_{ z}|.\end{split}\]

Since $$|f_{\bar z} f_{ z}|=\frac{\mathrm{Hopf}(f)}{|\wp(f(z))|^2}=\frac{|c|}{|z|^2\wp^2(f)}\le K'.$$ This implies the claim.
\end{proof}



\begin{proposition}\cite[Theorem~12.3]{gt}\label{gilbarg}
Let $K>1, K'\ge 0$ and assume that $f:\X\to \mathbf{C}$ is a $(K,K')-$quasiconformal mapping so that $|f(z)|\le M$, $z\in \X$ and assume that $\X'\Subset \X$ and let $d=\mathrm{dist}(\X',\partial\X)$. Then there is a constant $C=C(K)$ so that $$|f(z)-f(z')|\le C(K)(M+d\sqrt{K'})|z-z'|^\beta,\ z,z'\in\X'$$ where
$$\beta=K-\sqrt{K^2-1}.$$
If $K=1$, then the above theorem can be formulated for $K_1$ instead of $K$, where $K_1>1$ is an arbitrary constant. For example for $K_1=5/4$, for which we get $\beta =K_1-\sqrt{K_1^2-1}= 1/2$.
\end{proposition}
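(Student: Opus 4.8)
The plan is to separate the nondegenerate case $K>1$, which is classical, from the borderline $K=1$, which is the only point requiring a new remark. For $K>1$ I would recover the estimate by Morrey's Dirichlet-growth method. Fix $z_0\in\X'$, set $d=\dist(\X',\partial\X)$, and for concentric disks $B(z_0,\rho)\subset\X$ write $D(\rho)=\int_{B(z_0,\rho)}|Df|^2$ and the multiplicity-counted image area $A(\rho)=\int_{B(z_0,\rho)}J_f$. Integrating the distortion inequality $|Df|^2\le K J_f+K'$ gives $D(\rho)\le K A(\rho)+K'\pi\rho^2$. Passing to polar coordinates about $z_0$ one has $D'(\rho)=\int_0^{2\pi}(\rho|f_\rho|^2+\rho^{-1}|f_\theta|^2)\,d\theta$ and $A'(\rho)=\int_0^{2\pi}\im(\overline{f_\rho}f_\theta)\,d\theta$, and combining the pointwise distortion bound with the isoperimetric and Wirtinger inequalities on the circle $C_\rho=\partial B(z_0,\rho)$ produces a differential inequality of the type $\rho D'(\rho)\ge 2\beta\big(D(\rho)-K'\pi\rho^2\big)$.

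Here the exponent $\beta$ is forced to satisfy the quadratic relation $2K=\beta+\beta^{-1}$, whose root in $(0,1)$ is exactly $\beta=K-\sqrt{K^2-1}$; integrating the inequality (i.e. observing that $D(\rho)/\rho^{2\beta}$ is essentially monotone) yields the Morrey growth estimate $D(\rho)\le M_0\,\rho^{2\beta}$ on $\X'$. Morrey's Dirichlet-growth theorem then converts this into the pointwise bound $|f(z)-f(z')|\le C(K)(M+d\sqrt{K'})|z-z'|^\beta$, the dependence on $M$ and $d\sqrt{K'}$ being tracked through the integration constants and the normalization $|f|\le M$. Since the sharp constant is precisely what \cite[Theorem~12.3]{gt} establishes, I would cite that result for $K>1$ rather than reproduce the computation.

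The genuinely new point is the borderline $K=1$, where the growth estimate degenerates (the integration above produces a logarithm, $\beta=1$). The remedy is the elementary observation that quasiconformality is monotone in the linear distortion constant: if $f$ obeys $|Df|^2\le J_f+K'$, then, because $J_f\ge 0$, for every $K_1>1$ one also has $|Df|^2\le K_1 J_f+K'$, so $f$ is $(K_1,K')$-quasiconformal. Hence the $K>1$ estimate applies verbatim with $K$ replaced by any $K_1>1$, giving the Hölder exponent $\beta=K_1-\sqrt{K_1^2-1}$; the choice $K_1=5/4$ gives $\beta=1/2$.

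I expect the main obstacle to be the sharp ODE step, namely producing exactly $\beta=K-\sqrt{K^2-1}$ rather than some suboptimal exponent, since this rests on the precise form of the circle inequality relating $D'(\rho)$, $A'(\rho)$ and the oscillation of $f$ on $C_\rho$, and is the technical heart of the Gilbarg--Trudinger argument. The $K=1$ reduction, by contrast, is immediate once monotonicity in $K$ is noted, and is all that the present application actually needs to add.
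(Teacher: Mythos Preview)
Your proposal is correct and matches the paper's treatment: the paper gives no proof of this proposition but simply cites \cite[Theorem~12.3]{gt} for the case $K>1$, and the only added content is the one-line remark that for $K=1$ one may apply the cited result with any $K_1>1$ (e.g.\ $K_1=5/4$, giving $\beta=1/2$). Your monotonicity observation $|Df|^2\le J_f+K'\le K_1 J_f+K'$ (using $J_f\ge 0$) is exactly the justification behind that remark, and your sketch of the Morrey Dirichlet-growth argument is the classical Gilbarg--Trudinger proof being cited.
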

We also need the following lemma
\begin{lemma}\label{pali}\cite{calculus}
Assume that $\X$ and $\Y$ are doubly connected domains,
and assume that $a\colon \X \onto \A(1,R)$ and $b\colon
\A(1,\rho) \onto \Y$ are univalent conformal mappings and
define $\wp_1(w)={\wp(b(w))}{|b'(w)|}$, $w\in \A(1,\rho)$. Then
\begin{enumerate}[\ \ \ \ (a)]
\item $\E^{\wp}[b\circ
f\circ a]=\E^{\wp_1}[f]$ provided that one of the two sides exist.
    \item $b\circ f\circ a \in D^\wp(\X,\Y)$ if and only if
$f\in D^{\wp_1}(\A(1,R),\A(1,\rho)))$.
    \item For Gauss curvature we have $\mathcal{K}_\wp(b(w))=\mathcal{K}_{\wp_1}(w)$.
\item  $\wp$ is an allowable metric if and only if
$\wp_1$ is an allowable metric.
    \item
$W_l^{1,2}(\A(1,R),\A(1,\rho))=W^{1,2}(\A(1,R),\A(1,\rho)).$
\item $b\circ f\circ a$ is $\wp$-harmonic if and only if $f$ is
$\wp_1-$harmonic.
\end{enumerate}

\end{lemma}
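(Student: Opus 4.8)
The plan is to reduce all six assertions to two elementary Wirtinger chain-rule computations—one for the precomposition with the conformal map $a$ and one for the postcomposition with $b$—together with the change-of-variables formula for the holomorphic substitutions $\zeta=a(z)$ and $u=b(w)$, whose real Jacobians are $|a'|^2$ and $|b'|^2$. Writing $g=b\circ f\circ a$ and $F=f\circ a$, holomorphicity of $a,b$ gives $F_z=(f_\zeta\circ a)\,a'$, $F_{\bar z}=(f_{\bar\zeta}\circ a)\,\overline{a'}$ and $g_z=(b'\circ F)F_z$, $g_{\bar z}=(b'\circ F)F_{\bar z}$. The definition $\wp_1=\wp(b)|b'|$ is engineered precisely so that the energy density is invariant under $b$, namely $\wp^2(g)|g_z|^2=\wp^2(b(F))|b'(F)|^2|F_z|^2=\wp_1^2(F)|F_z|^2$, and likewise for the $\bar z$-terms. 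This one identity drives parts (a), (b) and (f).

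For (a) I would chain the two transformations: the identity above gives $\wp^2(g)(|g_z|^2+|g_{\bar z}|^2)=\wp_1^2(F)(|F_z|^2+|F_{\bar z}|^2)$, after which the precomposition relation $|F_z|^2+|F_{\bar z}|^2=\bigl((|f_\zeta|^2+|f_{\bar\zeta}|^2)\circ a\bigr)|a'|^2$ together with $dz=|a'|^{-2}d\zeta$ cancels every $|a'|$-factor and yields $\E^\wp[g]=\E^{\wp_1}[f]$ whenever either side is finite. The same bookkeeping applied to the Hopf product proves (f): one computes $\mathrm{Hopf}(g)=\wp^2(b(F))|b'(F)|^2F_z\overline{F_{\bar z}}=(a')^2\,(\mathrm{Hopf}_{\wp_1}(f)\circ a)$, so $\mathrm{Hopf}(g)$ transforms as a quadratic differential and is holomorphic exactly when $\mathrm{Hopf}_{\wp_1}(f)$ is; by the characterization \eqref{anal} this is the claimed equivalence of harmonicity.

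Part (b) then follows clause by clause from Definition~\ref{defdef}. The Sobolev regularity \eqref{dd1} passes through composition with the smooth conformal diffeomorphisms, and the Jacobian sign \eqref{dd2} is preserved because $J_g=(|b'\circ F|^2)(J_f\circ a)|a'|^2$ with $|a'|^2,|b'|^2>0$; the mass bound \eqref{dd2p} is preserved because area is conformally invariant, $\mathcal{A}(\wp_1)=\int_{\A(1,\rho)}\wp^2(b)|b'|^2=\mathcal{A}(\wp)$, while $\int_\X\wp^2(g)J_g=\int_{\A(1,R)}\wp_1^2(f)J_f$ by the same change of variables as in (a). For the approximating sequence \eqref{dd3} I would transport diffeomorphisms by $f_j=b^{-1}\circ h_j\circ a^{-1}$ and use that $\wp_1$ is the $b$-pullback of $\wp$, so that arc length, and hence the boundary-distance functions, satisfy $\dist_{\wp_1}(\cdot,\partial\A(1,\rho))=\dist_\wp(b(\cdot),\partial\Y)$; composing with the boundary homeomorphisms induced by $a,b$ then shows that $c\delta$-convergence is carried across intact. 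Statement (c) is the classical conformal law for curvature: since $\log|b'|$ is harmonic and $\Delta(\log\wp\circ b)=\bigl((\Delta\log\wp)\circ b\bigr)|b'|^2$, the factor $|b'|^2$ cancels against $\wp_1^2=\wp^2(b)|b'|^2$ in \eqref{gaus}, giving $\mathcal{K}_{\wp_1}(w)=\mathcal{K}_\wp(b(w))$. Finally (e) is the remark already recorded after Definition~\ref{defdef}: both annuli are bounded, so $h\in W^{1,2}_{loc}$ with $|dh|\in L^2$ forces $h\in L^2$, whence $h\in W^{1,2}$, and the reverse inclusion is trivial.

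The only assertion requiring genuine care is (d). Differentiating $\log\wp_1=(\log\wp)\circ b+\log|b'|$ gives $\nabla\log\wp_1=\bigl((\nabla\log\wp)\circ b\bigr)|b'|+\nabla\log|b'|$, so membership of $\nabla\log\wp_1$ in $L^\infty$, as in \eqref{pp}, hinges on $|b'|$ being bounded above and below and on $\nabla\log|b'|$ being bounded up to $\partial\A(1,\rho)$. This is exactly where the $\mathscr{C}^{1,\alpha}$ regularity of the boundaries enters, through Kellogg-type boundary regularity of the conformal maps $a$ and $b$ between doubly connected domains, which makes $b'$ and $1/b'$ extend continuously to the closure. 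Granting that input, the displayed gradient formula shows $\nabla\log\wp_1\in L^\infty$ if and only if $\nabla\log\wp\in L^\infty$, and the same argument handles the allowable case. I expect this boundary-regularity step—rather than any of the interior chain-rule identities—to be the main obstacle.
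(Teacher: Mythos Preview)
The paper does not supply its own proof of this lemma: it is quoted verbatim from \cite{calculus} and used as a black box in the proof of Theorem~\ref{mainexist}. There is therefore nothing in the present paper to compare your argument against.

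That said, your outline is the standard and correct one, and it is almost certainly what the cited reference does as well: the identity $\wp_1=\wp(b)|b'|$ is exactly the pullback of the conformal factor under $b$, so the energy density, the Jacobian, the area, the Hopf differential, and the Gauss curvature all transform tautologically, which disposes of (a), (b), (c), (e), (f) by the Wirtinger chain rule plus change of variables, just as you wrote.

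Your caution about (d) is well placed and is the one genuinely nontrivial point. The bound $\nabla\log\wp_1\in L^\infty$ indeed requires $|b'|$, $1/|b'|$, and $b''/b'$ to be bounded on $\A(1,\rho)$, and this does \emph{not} follow from the bare hypotheses stated in the lemma; it needs boundary regularity of $\partial\Y$. In the present paper this is harmless because Theorem~\ref{mainexist} assumes $\mathscr{C}^{1,\alpha}$ boundaries and invokes Kellogg's theorem anyway, but as a free-standing statement part (d) is only true under that extra hypothesis, which you correctly flagged.
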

\begin{proof}[Proof of Theorem~\ref{mainexist}] The existence part has been discussed in Proposition~\ref{attain}. We will apply a self-improving argument.
In view of Lemma~\ref{pali}, we can assume that $\X=\A(1,R)$ for a constant $R>1$.
Assume that $f: \mathbb{A}(1,R) \to \mathbb{Y},$ so that $$\wp^2(f(z))f_z \bar f_z = \frac{c}{z^2}.$$
Let $\Psi: \mathbb{Y} \to \mathbb{A}(1, \rho)$ be a conformal diffeomorphism and let $\Phi=\Psi^{-1}$.

Then $F(z) = \Psi \circ f: \mathbb{A}(1,R)\to \mathbb{A}(1,\rho)$ is a minimizer of $\wp_1-$ energy, between $\A(1,R)$ and $\A(1,\rho)$, where $\wp_1(\zeta)={\wp(\Phi(\zeta))}{|\Phi'(\zeta)|}$
and \begin{equation}\label{lindje}\mathrm{Hopf}(F)(z)=\wp_1^2(F(z))F_z  \bar F_z = \frac{c}{z^2}.\end{equation}
Further for
$$\tilde F(z) = \left\{
                  \begin{array}{ll}
                    F(z), & \hbox{$1< |z|\le R$;} \\
                    \rho^2 /\overline{F(R^2/\bar z)}, & \hbox{$R\le |z|< R^2$}
\\ 1/\overline{F(1/\bar z)}, &\hbox{$1/R\le |z|\le 1$},
                  \end{array}
                \right.$$
let $$\tilde \wp(w)=\left\{
                   \begin{array}{ll}
                     \wp_1(w), & \hbox{$1\le |w|\le \rho $;} \\
                     \wp_1(\rho^2/\bar w), & \hbox{$\rho<|w|<\rho^2$,}\\
                     \wp_1(1/\bar w), & \hbox{$1/\rho<|w|\le 1$.}

                   \end{array}
                 \right.$$
Observe that $\tilde F$ is continuous. Namely, if
$F(Re^{it})=\rho e^{is},$ then $$\rho^2/\overline{F(R^2/(Re^{-it}))}=\rho e^{is}.$$
Thus $\tilde F: \mathbb{A}(1/R,R^2) \to \mathbb{A}(1/\rho,\rho^2)$ is continuous and belongs to the same class as $F$, which mean it is in $\mathscr{W}^{1,2}$.
Then by direct  calculation we get
$$\mathrm{Hopf}(\tilde F) =\tilde\wp^2(\tilde F(z))\tilde{F}_z\overline{\tilde{F}}_z=\left\{
                      \begin{array}{ll}
                         \frac{c}{z^2}, & \hbox{$1< |z|\le R$;} \\
                        \frac{\rho^4}{|F(R^2/\bar z)|^4} \frac{c}{z^2}, & \hbox{$R\le |z|< R^2$.}
 \\
                        \frac{1}{|F(1/\bar z)|^4} \frac{c}{z^2}, & \hbox{$1/R\le |z|< 1$.}
                      \end{array}
                    \right.
$$
Let us demonstrate for example $$\mathrm{Hopf}(\tilde F)=\frac{1}{|F(1/\bar z)|^4} \frac{c}{z^2}$$ for  $1/R\le |z|< 1$.

We have $$\tilde F_z(z) = \frac{d}{dz}\left(\frac{1}{\overline{F}(\frac{1}{\bar z})}\right)=\frac{1}{\overline{F^2}(\frac{1}{\bar z})}\overline{F_z\left(\frac{1}{\bar z}\right)} \frac{1}{z^2}$$
and
$$\overline{\tilde F}_z(z) = \frac{d}{dz}\left(\frac{1}{{F}(\frac{1}{\bar z})}\right)=\frac{1}{{F^2}(\frac{1}{\bar z})}\overline{\overline{F}_z\left(\frac{1}{\bar z}\right)} \frac{1}{z^2}.$$

Then from \eqref{lindje} we get $$\tilde\wp^2 (\tilde F(z))\tilde F_z(z) \overline{\tilde F}_z(z)=\frac{1}{|F(\frac{1}{\bar z})|^4}\frac{cz^2}{z^4}=\frac{1}{|F(\frac{1}{\bar z})|^4}\frac{c}{z^2}.$$

From Kellogg theorem for conformal mappings we know that $\Phi'$ is $\mathscr{C}^{\alpha}$ up to the boundary of $\Y$. This implies in particular that the function $$\tilde F_z \overline{\tilde F}_z=\frac{\mathrm{Hopf}(\tilde F)}{\tilde \wp^2(\tilde F(z))}$$ is bounded in $\mathbb{A}(1/R,R^2)$. From Lemma~\ref{popi}, in view of Lemma~\ref{exc}, we obtain that $\tilde F$ is $(1,\tilde K)$-quasiconformal. Now Proposition~\ref{gilbarg} implies that $F$ is $\frac{1}{2}-$H\"older continuous in $\A(1,R).$ More precisely $$|\tilde F(z) - \tilde F(z')|\le C(1)\left(\rho^2+\frac{R^2-1}{R^2}\sqrt{\tilde K}\right)|z-z'|^{1/2}.$$

So

$$|F(z) - F(z')|\le C(1)\left(\rho^2+\frac{R^2-1}{R^2}\sqrt{\tilde K}\right)|z-z'|^{1/2}.$$

This implies that $\tilde F_z \overline{\tilde F}_z$ is $\alpha\cdot 1/2-$H\"older continuous as a composition of two mappings which are $\alpha-$ and $1/2-$H\"older continuous respectively in $\A(1/R, R^2)$. Now Theorem~\ref{LipHopf2} implies that $\tilde F$ is locally Lipschitz. In particular $F$ is Lipschitz in $\overline{\A(1,R)}\Subset \A(1/R, R^2)$. Therefore $f:\A(1,R)\to \Y$ is Lipschitz continuous. Now Lemma~\ref{pali} and Kellogg theorem for conformal mappings imply the claim. This finishes the proof of the main result.
\end{proof}


\begin{thebibliography}{1}


\bibitem{Ahb}
L. V. Ahlfors, \textit{Lectures on quasiconformal mappings},  Second
edition. With supplemental chapters by C. J. Earle, I. Kra, M.
Shishikura and J. H. Hubbard. University Lecture Series, 38.
American Mathematical Society, Providence, RI, 2006.
\bibitem{evans}
\textsc{L. C. Evans}
\emph{A new proof of local $C^{1,\alpha}$ regularity for solutions of certain degenerate elliptic P.D.E.}
J. Differ. Equations 45, 356--373 (1982).
\bibitem{AIMb}
K. Astala, T. Iwaniec, and G. Martin, \textit{Elliptic partial
differential equations and quasiconformal mappings in the plane},
Princeton University Press, Princeton, NJ, 2009.

\bibitem{gt} \textsc{D.~Gilbarg and N.~Trudinger}: \emph{Elliptic Partial
Differential Equations of Second Order}. 2 Edition,
Springer 1977, 1983.
\bibitem{EH}
\textsc{E. Heinz:}
\emph{On certain nonlinear elliptic differential equations and univalent mappings.}
J. Anal. Math. 5, 197--272 (1957).
\bibitem{arma0}\textsc{T. Iwaniec, Tadeusz, J. Onninen}
\emph{Mappings of least Dirichlet energy and their Hopf differentials.}
Arch. Ration. Mech. Anal. \textbf{209}, No. 2, 401--453 (2013).

\bibitem{inve}
\textsc{T. Iwaniec,  N.-T. Koh, L.V. Kovalev, J. Onninen:} \emph{Existence of energy-minimal diffeomorphisms between doubly connected domains.} Invent. Math. \textbf{186}, 667--707 (2011).

\bibitem{duke}\textsc{T. Iwaniec, Tadeusz, J. Onninen, L. Kovalev:}
\emph{Lipschitz regularity for inner-variational equations.}
Duke Math. J. \textbf{162}, No. 4, 643--672 (2013).

\bibitem{jost}
\textsc{ J. Jost}, {\it Harmonic maps between surfaces. Lecture Notes in
Mathematics,} 1062. Springer-Verlag, Berlin, 1984. x+133 pp.

\bibitem{Job1} \textsc{J. Jost,}
\emph{Minimal surfaces and Teichm\"uller theory}.  Yau, Shing-Tung
(ed.), Tsing Hua lectures on geometry and analysis, Taiwan, 1990-91.
Cambridge, MA: International Press. 149-211 (1997).



\bibitem{geom}
\textsc{D. Kalaj}:
\emph{Lipschitz property of minimisers between double connected surfaces}.
J.  Geom. Anal. \textbf{30}, 4150--4165(2020)
\bibitem{arxivkalaj}
\textsc{D. Kalaj}:
\emph{Kellogg's theorem for diffeomophic minimisers of Dirichlet energy between doubly connected Riemann surfaces}, arXiv:2011.04629.
\bibitem{calculus}
\textsc{D.  Kalaj:} \emph{Energy-minimal diffeomorphisms between doubly connected Riemann surfaces.} Calc. Var. Partial Differ. Equ.\textbf{51} , 465--494 (2014).
\bibitem{annalen}
\textsc{D. Kalaj, B. Lamel:}
\emph{Minimisers and Kellogg's theorem.}
Math. Ann. 377, No. 3-4, 1643--1672 (2020).
\end{thebibliography}
\end{document}